\documentclass[letterpaper, 10 pt, conference]{ieeeconf}
\usepackage{amsfonts}
\usepackage{amsmath}
\usepackage{amssymb}
\usepackage{color}
\usepackage{cite}
\usepackage{epsfig}
\usepackage{graphics}
\usepackage{graphicx}
\usepackage{ifpdf}
\usepackage{subfigure}
\usepackage{times}

\setcounter{MaxMatrixCols}{10}

\IEEEoverridecommandlockouts                              
\overrideIEEEmargins
\newtheorem{theorem}{Theorem}

\newtheorem{corollary}[theorem]{Corollary}

\newtheorem{definition}[theorem]{Definition}
\newtheorem{example}[theorem]{Example}

\newtheorem{lemma}[theorem]{Lemma}

\newtheorem{remark}[theorem]{Remark}

\begin{document}

\title{{\LARGE \textbf{From Local Measurements to Network Spectral
Properties: Beyond Degree Distributions}}}
\author{Victor~M.~Preciado and~Ali~Jadbabaie\thanks{%
This work is partially supported by the ONR MURI HUNT and AFOR MURI Complex
Networks programs.}\thanks{%
Victor~M.~Preciado and~Ali~Jadbabaie are with GRASP Laboratory, School of
Engineering and Applied Science, University of Pennsylvania, Philadelphia,
PA 19104, USA \texttt{\footnotesize \{preciado,jadbabai\}@seas.upenn.edu}}}
\maketitle

\begin{abstract}
It is well-known that the behavior of many dynamical processes running on
networks is intimately related to the eigenvalue spectrum of the network. In
this paper, we address the problem of inferring global information regarding
the eigenvalue spectrum of a network from a set of local samples of its
structure. In particular, we find explicit relationships between the
so-called spectral moments of a graph and the presence of certain small
subgraphs, also called motifs, in the network. Since the eigenvalues of the
network have a direct influence on the network dynamical behavior, our
result builds a bridge between local network measurements (i.e., the
presence of small subgraphs) and global dynamical behavior (via the spectral
moments). Furthermore, based on our result, we propose a novel decentralized
scheme to compute the spectral moments of a network by aggregating local
measurements of the network topology. Our final objective is to understand
the relationships between the behavior of dynamical processes taking place
in a large-scale complex network and its local topological properties.
\end{abstract}




\section{Introduction}

\label{sec:introduction}

Research in complex networks has important applications in today's massive
networked systems, including the Internet, the World-Wide Web (WWW), as well
as social, biological and chemical networks \cite{S01}. The availability of
massive databases, and reliable tools for data analysis provides a powerful
framework to explore structural properties of large-scale networks \cite%
{BLMCH06}. In many real-world cases, it is impossible to efficiently
retrieve and/or store the exact structure of a complex network due to, for
example, a prohibitively large network size or privacy/security concerns. On
the other hand, it is often possible to gather a great deal of information
by examining local samples of the graph topology, such as the degree
distribution, which are usually easy to collect. In this context, a
challenging problem is to find relationships between the set of local
samples of the network structure and its global functionality.

Many dynamical processes on networks, such as random walks \cite{Ald82},
virus/rumor spreading \cite{DGM08},\cite{PJ09}, and synchronization of
oscillators \cite{PC98},\cite{PV05}, are interesting to study in the context
of large-scale complex networks. The behavior of many of these processes are
intimately related to the eigenvalue spectra of the underlying graph
structure \cite{P08},\cite{PZJP10}. For example, the speed of spreading of a
virus is directly related with the spectral radius of the adjacency matrix
of the network \cite{DGM08}. Hence, spectral graph theory provides us with a
framework to study the relationship between the network structure and its
dynamical behavior.

Network motifs are small subgraphs that are present in a network with a much
higher frequently than in random networks with the same degree sequence \cite%
{MSIKCA02}. There is both empirical and theoretical evidence showing that
these subgraphs play a key role in the network's function and organization 
\cite{Alon07}. One of the main objectives of this paper is to explicitly
relate global properties of a given network with the presence of certain
small subgraphs that can be counted via \emph{local measurements}. We focus
our attention on global properties related with the network's eigenvalues,
in particular, the so-called spectral moments. Since the spectral properties
are known to have a direct influence on the network dynamical behavior, our
result builds a bridge between local network measurements (i.e., the
presence of small subgraphs) and global dynamical behavior (via the spectral
moments). It is also worth pointing out that, although a set of spectral
moments is not enough to completely describe the spectral distribution of a
network, it allows us to extract a great deal of information. For example,
Popescu and Bertsimas provide in \cite{PB05} an optimization framework for
computing optimal bounds on the properties of a distribution from moments
constrains. More generally, there is a variety of techniques that can be
applied to extract spectral information from a truncated sequence of
spectral moments \cite{P08}.

Based on our results, we also propose a novel decentralized algorithm to
efficiently aggregate a set of local network measurements into global
spectral moments. Our work is related to \cite{KM08}, where a fully
distributed algorithm is proposed to compute the full set of eigenvalues and
eigenvectors of a matrix representing the network topology. In contrast, our
approach is computationally much cheaper, since it does not require a
complete eigenvalue decomposition. Furthermore, our approach also provides a
clearer view of the role of certain subgraphs in the network's dynamical
behavior.

The rest of this paper is organized as follows. In the next section, we
review graph-theoretical terminology and introduce definitions needed in our
derivations. In Section~\ref{sec:compute_moments}, we derive explicit
relationships between the moments of the eigenvalue spectrum and local
network measurements. Based on these expressions, we introduce a distributed
algorithm to compute these moments in Section \ref{Distributed Algorithm}.
We conclude the paper mentioning some future work.

\section{Definitions \& Notation}

\label{sec:problem}

Let $\mathcal{G}=\left( \mathcal{V},\mathcal{E}\right) $ denote a simple
undirected graph (with no self-loops) on $n$ nodes, where $\mathcal{V}\left( 
\mathcal{G}\right) =\left\{ v_{1},\dots ,v_{n}\right\} $ denotes the set of
nodes and $\mathcal{E}\left( \mathcal{G}\right) \subseteq \mathcal{V}\left( 
\mathcal{G}\right) \times \mathcal{V}\left( \mathcal{G}\right) $ is the set
of undirected edges. If $\left\{ v_{i},v_{j}\right\} \in \mathcal{E}\left( 
\mathcal{G}\right) $ we call nodes $v_{i}$ and $v_{j}$ \emph{adjacent} (or
neighbors), which we denote by $v_{i}\sim v_{j}$. The set of all nodes
adjacent to a node $v\in \mathcal{V}\left( \mathcal{G}\right) $ constitutes
the \emph{neighborhood} of node $v$, defined by $\mathcal{N}^{v}=\{w\in 
\mathcal{V}\left( \mathcal{G}\right) :\left\{ v,w\right\} \in \mathcal{E}%
\left( \mathcal{G}\right) \}$, and the number of those neighbors is called
the \emph{degree} of node $v$, denoted by $\deg v$ or $d_{v}$.

We define a \emph{walk} of length $k$ from $v_{0}$ to $v_{k}$ to be an
ordered sequence of nodes $\left( v_{0},v_{1},...,v_{k}\right) $ such that $%
v_{i}\sim v_{i+1}$ for $i=0,1,...,k-1$. If $v_{0}=v_{k}$, then the walk is
closed. A closed walk with no repeated nodes (with the exception of the
first and last nodes) is called a \emph{cycle}. \emph{Triangles} and \emph{%
quadrangles} are cycles of length three and four, respectively. We say that
a graph $\mathcal{G}$ is \emph{connected} if there exists a walk between
every pair of nodes. Let $d\left( v,w\right) $ denote the \emph{distance}
between two nodes $v$ and $w$, i.e., the minimum length of a walk from $v$
to $w$. We define the diameter of a graph, denoted by $diam\left( \mathcal{G}%
\right) $, as the maximum distance between any pair of nodes in $\mathcal{G}$%
. We say that $v$ and $w$ are $k$-th order neighbors if $d\left( v,w\right)
=k,$ and define the $k$-th order neighborhood of a node $v$ as the set of
nodes within a distance $k$ from $v$, i.e., $\mathcal{N}_{k}^{v}=\left\{
w\in \mathcal{V}\left( \mathcal{G}\right) :d\left( v,w\right) \leq k\right\} 
$. A $k$-th order neighborhood, induces a subgraph $\mathcal{G}%
_{k}^{v}\subseteq \mathcal{G}$ with node-set $\mathcal{N}_{k}^{v}$ and
edge-set $\mathcal{E}_{k}^{v}$ defined as the subset of edges of $\mathcal{E}%
\left( \mathcal{G}\right) $ that connect two nodes in $\mathcal{N}_{k}^{v}$.

Graphs can be algebraically represented via the \emph{adjacency} matrix. The 
\emph{adjacency matrix} of an undirected graph $\mathcal{G}$, denoted by $A_{%
\mathcal{G}}=[a_{ij}]$, is an $n\times n$ symmetric matrix defined
entry-wise as $a_{ij}=1$ if nodes $v_{i}$ and $v_{j}$ are adjacent, and $%
a_{ij}=0$ otherwise\footnote{%
For simple graphs with no self-loops, $a_{ii}=0$ for all $i$.}.


\section{Spectral Analysis via Subgraph Embedding}

\label{sec:compute_moments}

\bigskip

In this section, we derive an explicit relationship between the spectral
moments of the adjacency matrix and the presence of certain subgraphs in $%
\mathcal{G}$. We say that a graph $\mathcal{H}$ is embedded in $\mathcal{G}$
if $\mathcal{H}$ is isomorphic\footnote{%
An isomorphism of graphs $\mathcal{G}$ and $\mathcal{H}$ is a bijection
between the vertex sets $\mathcal{V}(\mathcal{G})$ and $\mathcal{V}(\mathcal{%
H})$, $f:\mathcal{V}(\mathcal{G})\rightarrow \mathcal{V}(\mathcal{H})$, such
that any two vertices $u$ and $v$ of $\mathcal{G}$ are adjacenct in $%
\mathcal{G}$ if and only if $f(u)$ and $f(v)$ are adjacent in $\mathcal{H}$.}
to a subgraph in $\mathcal{G}$. The embedding frequency of $\mathcal{H}$ in $%
\mathcal{G}$, denoted by $F(\mathcal{H},\mathcal{G})$, is the number of
different subgraphs of $\mathcal{G}$ to which $\mathcal{H}$ is isomorphic.
The term network motif is used to designate those subgraphs of $\mathcal{G}$
that occur with embedding frequencies far higher than in random networks
with the same degree sequence \cite{MSIKCA02}. Theoretical and experimental
evidence shows that some of these motifs carry significant information about
the network's function and organization \cite{Alon07}. In this section, we
derive an explicit expression for the spectral moments as a linear
combination of the embedding frequencies of certain subgraphs. Our results
provide a direct relationship between the presence of network motifs and
global properties of the network, in particular, spectral moments. In the
coming subsections, we first provide a theoretical foundation for our
analysis. Second, we derive explicit expressions for the spectral moments in
terms of network metrics, such as the degree sequence or the number of
triangles in the graph. In the next Section, we propose a decentralized
algorithm to compute the spectral moments of a network based on
decentralized subgraph counting.

\subsection{Spectral Moments and Subgraph Embedding}

Algebraic graph theory provide us with the tools to relate topological
properties of a graph with its spectral properties. In particular, we are
interested in studying the spectral moments of the adjacency matrix of a
given graph. We denote by $\{\lambda _{i}\}_{i=1}^{n}$ the set of
eigenvalues of $A_{\mathcal{G}}$ and define the $k$-th spectral moment of
the adjacency matrix as%
\begin{equation}
m_{k}\left( A_{\mathcal{G}}\right) =\frac{1}{n}\sum_{i=1}^{n}\lambda
_{i}^{k}.  \label{Spectral Moment}
\end{equation}%
From algebraic graph theory, we have the following result relating the $k$%
-th spectral moment of a graph $\mathcal{G}$ with the number of closed walks
of length $k$ in $\mathcal{G}$ \cite{Big93}:

\begin{lemma}
Let $\mathcal{G}$ be a simple graph. The $k$-th spectral moment of the
adjacency matrix of $\mathcal{G}$ can be written as%
\begin{equation}
m_{k}(A_{\mathcal{G}})=\frac{1}{n}\left\vert \Psi _{\mathcal{G}}^{\left(
k\right) }\right\vert ,  \label{Moments as Walks in Graph}
\end{equation}%
where $\Psi _{\mathcal{G}}^{\left( s\right) }$ is the set of all possible
closed walks of length $k$ in $\mathcal{G}\footnote{%
We denote by $\left\vert Z\right\vert $ the cardinality of a set $Z$.}$.
\end{lemma}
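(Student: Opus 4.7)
The plan is to combine two standard facts: (i) the diagonal entries of $A_{\mathcal{G}}^{k}$ enumerate closed walks of length $k$, and (ii) the trace of a real symmetric matrix equals the sum of its eigenvalues (raised to the same power when taking matrix powers). Both facts together immediately yield the stated identity after dividing by $n$.

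First, I would prove by induction on $k$ that the entry $(A_{\mathcal{G}}^{k})_{ij}$ equals the number of walks of length $k$ from $v_{i}$ to $v_{j}$. The base case $k=1$ is the definition of the adjacency matrix. For the inductive step, I would expand
\begin{equation*}
(A_{\mathcal{G}}^{k})_{ij}=\sum_{l=1}^{n}(A_{\mathcal{G}}^{k-1})_{il}\,(A_{\mathcal{G}})_{lj},
\end{equation*}
and observe that every walk of length $k$ from $v_{i}$ to $v_{j}$ decomposes uniquely as a walk of length $k-1$ from $v_{i}$ to some neighbor $v_{l}$ of $v_{j}$ followed by the edge $\{v_{l},v_{j}\}$. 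The inductive hypothesis then identifies the sum with the count of such decompositions.

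Second, setting $i=j$ and summing gives
\begin{equation*}
\operatorname{tr}(A_{\mathcal{G}}^{k})=\sum_{i=1}^{n}(A_{\mathcal{G}}^{k})_{ii}=\bigl|\Psi_{\mathcal{G}}^{(k)}\bigr|,
\end{equation*}
since the diagonal entry $(A_{\mathcal{G}}^{k})_{ii}$ counts the closed walks of length $k$ based at $v_{i}$. On the other hand, since $A_{\mathcal{G}}$ is real symmetric, the spectral theorem provides an orthogonal diagonalization $A_{\mathcal{G}}=Q\Lambda Q^{\top}$; hence $A_{\mathcal{G}}^{k}=Q\Lambda^{k}Q^{\top}$ and the trace is invariant under cyclic permutation, yielding $\operatorname{tr}(A_{\mathcal{G}}^{k})=\sum_{i=1}^{n}\lambda_{i}^{k}$. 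Dividing by $n$ and invoking the definition \eqref{Spectral Moment} finishes the proof.

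I do not anticipate a genuine obstacle: the argument is a textbook computation. The only minor point worth being careful about is the bookkeeping in the inductive step, specifically that walks are ordered sequences (so concatenations with distinct intermediate vertices are counted as distinct walks), which is exactly what the matrix product tallies.
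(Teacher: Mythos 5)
Your proof is correct and complete: the induction showing $(A_{\mathcal{G}}^{k})_{ij}$ counts walks of length $k$, followed by the trace identity $\operatorname{tr}(A_{\mathcal{G}}^{k})=\sum_{i}\lambda_{i}^{k}$, is exactly the standard argument. The paper itself gives no proof of this lemma, citing it directly from Biggs \cite{Big93}, and your write-up is precisely the textbook derivation that reference supplies, so there is nothing to reconcile.
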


\bigskip

\begin{corollary}
\label{One corollary}Let $\mathcal{G}$ be a simple graph. Denote by $E_{%
\mathcal{G}}$ and $\Delta _{\mathcal{G}}$ the number of edges and triangles
in $\mathcal{G}$, respectively. Then, 
\begin{equation}
m_{1}(A_{\mathcal{G}})=0,\text{ }m_{2}(A_{\mathcal{G}})=2E_{\mathcal{G}}/n,%
\text{ and }m_{3}(A_{\mathcal{G}})=6\,\Delta _{\mathcal{G}}/n.
\label{algebraic graph}
\end{equation}
\end{corollary}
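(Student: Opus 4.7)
The plan is to apply the preceding lemma, which expresses the $k$-th spectral moment as $\frac{1}{n}|\Psi_{\mathcal{G}}^{(k)}|$, and to enumerate directly the closed walks of lengths $1$, $2$, and $3$ in a simple graph with no self-loops.

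First, for $k=1$, I would observe that a closed walk of length one is a sequence $(v_0,v_1)$ with $v_0 \sim v_1$ and $v_0 = v_1$, i.e., a self-loop. Since $\mathcal{G}$ is simple, no such walks exist, so $|\Psi_{\mathcal{G}}^{(1)}| = 0$, giving $m_1(A_{\mathcal{G}}) = 0$. Next, for $k=2$, a closed walk $(v_0,v_1,v_0)$ requires only that $\{v_0,v_1\} \in \mathcal{E}(\mathcal{G})$. Each undirected edge $\{v,w\}$ produces exactly two ordered closed walks, namely $(v,w,v)$ and $(w,v,w)$. Hence $|\Psi_{\mathcal{G}}^{(2)}| = 2E_{\mathcal{G}}$, yielding $m_2(A_{\mathcal{G}}) = 2E_{\mathcal{G}}/n$.

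The step that requires a touch of care is $k=3$. A closed walk $(v_0,v_1,v_2,v_0)$ of length three consists of three edges $\{v_0,v_1\},\{v_1,v_2\},\{v_2,v_0\}$. Because $\mathcal{G}$ has no self-loops, the three vertices $v_0,v_1,v_2$ must be pairwise distinct (otherwise one of these pairs would coincide, forcing a self-loop among the required adjacencies). Thus every closed walk of length three traces out a triangle. Conversely, each unordered triangle $\{u,v,w\}$ produces exactly $3! = 6$ ordered closed walks of length three (three choices of starting vertex times two choices of orientation). Therefore $|\Psi_{\mathcal{G}}^{(3)}| = 6\Delta_{\mathcal{G}}$, and the lemma gives $m_3(A_{\mathcal{G}}) = 6\Delta_{\mathcal{G}}/n$.

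The main (mild) obstacle is the last step's combinatorial bookkeeping: one must justify both that no degenerate triple $v_0,v_1,v_2$ is allowed (this is where the no-self-loop hypothesis enters) and that each triangle is counted with multiplicity exactly six. Once this is settled, the three identities drop out of the lemma without further computation.
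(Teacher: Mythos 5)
Your proof is correct and follows exactly the route the paper intends: the corollary is stated as an immediate consequence of the lemma identifying $m_k(A_{\mathcal{G}})$ with $\frac{1}{n}\left\vert \Psi_{\mathcal{G}}^{(k)}\right\vert$, and your enumeration of closed walks of lengths $1$, $2$, and $3$ (no self-loops, two walks per edge, six walks per triangle) is the standard counting argument the paper leaves implicit. Nothing is missing.
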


\bigskip

In the following, we develop on the above Lemma to derive an expression for $%
m_{k}\left( A_{\mathcal{G}}\right) ,$ for any $k$, as a linear combination
of the embedding frequencies of certain subgraphs. Although the first 3
moments present very simple expressions in Corollary \ref{One corollary},
the larger the $k$, the more involved those expressions become. In what
follows, we describe a systematic procedure to derive these expressions
efficiently.

First, we need to introduce some nomenclature. Given a walk of length $k$ in 
$\mathcal{G}$, $w=\left( v_{0},v_{1},...,v_{k}\right) $, we denote its
node-set as $\mathcal{V}\left( w\right) =\left\{ v_{0},...,v_{k}\right\} $
and its edge-set as $\mathcal{E}\left( w\right) =\cup _{i=1}^{k}\left\{
v_{i-1},v_{i}\right\} $. Hence, we define the underlying simple graph of a
walk $w$ as $\mathcal{H}\left( w\right) =\left( \mathcal{V}\left( w\right) ,%
\mathcal{E}\left( w\right) \right) $. We say that a simple subgraph $%
h\subseteq \mathcal{G}$ spans the walk $w$ if $h$ is isomorphic to $\mathcal{%
H}\left( w\right) $. Notice how different walks can share the same
underlying simple graph. We also denote by $\mathcal{I}\left( h\right) $ the
unlabeled simple graph isomorphic to a given graph $h$. Applying the
function $\mathcal{I}$ to the underlying graph of a given walk $w$, we
obtain an unlabeled graph $g=\mathcal{I}\left( \mathcal{H}\left( w\right)
\right) $. Notice how different walks, not even sharing the same edge-set,
can share the same \emph{unlabeled} underlying simple graph. For example,
any closed triangular walk $w_{3}=\left( v_{i},v_{j},v_{k},v_{i}\right) $
with $v_{i}\sim v_{j}\sim v_{k}\sim v_{i}$ gives rise to the same unlabeled
graph $\mathcal{I}\left( \mathcal{H}\left( w_{3}\right) \right) $, namely,
an unlabeled triangle.

We denote the set of closed walks of length $k$ in $\mathcal{G}$ sharing the
same underlying simple graph $h$ as $\mathcal{R}_{h}=\left\{ w\in \Psi _{%
\mathcal{G}}^{\left( k\right) }\text{ s.t. }\mathcal{H}\left( w\right)
=h\right\} $. Inversely, consider the set of all closed walks of length $k$
in $\mathcal{G}$, $\Psi _{\mathcal{G}}^{\left( k\right) }$. We can define
another set, which we denote by $\mathbf{H}_{k}$, containing all possible
simple graphs spanning walks in $\Psi _{\mathcal{G}}^{\left( k\right) }$,
i.e., $\mathbf{H}_{k}=\left\{ \mathcal{H}\left( w_{k}\right) ,w_{k}\in \Psi
_{\mathcal{G}}^{\left( k\right) }\right\} $. Furthermore, we define the set
of \emph{unlabeled} graph associated with walks of length $k$ as $\mathbf{I}%
_{k}=\left\{ \mathcal{I}\left( \mathcal{H}\left( w_{k}\right) \right)
,w_{k}\in \Psi _{\mathcal{G}}^{\left( k\right) }\right\} $. In what follows,
we derive an expression for the $k$-th spectral moment as a linear
combination of the embedding frequencies of the (unlabeled) graphs in $%
\mathbf{I}_{k}$.

Note that the mapping $\mathcal{H}:\Psi _{\mathcal{G}}^{\left( k\right)
}\rightarrow \mathbf{H}_{k}$, is a surjection onto $\mathbf{H}_{k}$ that
induces a partition in $\Psi _{\mathcal{G}}^{\left( k\right) }$, namely,
each graph $h\in \mathbf{H}_{k}$ (with labeled nodes), induces a partition
subset $\mathcal{R}_{h}\subseteq \Psi _{\mathcal{G}}^{\left( k\right) }$.
Similarly, the mapping $\mathcal{I}:\mathbf{H}_{k}\rightarrow \mathbf{I}_{k}$
is a surjection onto $\mathbf{I}_{k}$ and induces a partition in $\mathbf{H}%
_{k}$. Each unlabeled graph $g\in \mathbf{I}_{k}$ defines a partition subset 
$\mathcal{S}_{g}\subseteq \mathbf{H}_{k}$ defined as $\mathcal{S}%
_{g}=\left\{ h\in \mathbf{H}_{k}\text{ s.t. }\mathcal{I}\left( h\right)
=g\right\} $. Note that for $g\in \mathbf{I}_{k}$, $\left\vert \mathcal{S}%
_{g}\right\vert $ is the number of subgraphs of $\mathcal{G}$ isomorphic to $%
g$; hence, $\left\vert \mathcal{S}_{g}\right\vert $ equals the embedding
frequency $F\left( g,\mathcal{G}\right) $. This observation shall be
important in the next Section, where we design a decentralized algorithm to
compute spectral moments.

Based on the above, we derive the following result regarding the spectral
moments of $\mathcal{G}$:

\begin{theorem}
\label{Motif Theorem}Given a simple graph $\mathcal{G}$, its $k$-th spectral
moment can be written as%
\begin{equation}
m_{k}(A_{\mathcal{G}})=\frac{1}{n}\sum_{g\in \mathbf{I}_{k}}\omega
_{g}^{\left( k\right) }~F\left( g,\mathcal{G}\right) .
\label{Moments from Motifs}
\end{equation}%
where $\omega _{g}^{\left( k\right) }=\left\vert \left\{ w\in \Psi
_{g}^{\left( k\right) }:\mathcal{I}\left( \mathcal{H}\left( w\right) \right)
=g\right\} \right\vert $; in other words, $\omega _{g}^{\left( k\right) }$
is the number of all possible closed walks of length $k$ in $g$ with
underlying unlabeled graphs isomorphic to $g$.
\end{theorem}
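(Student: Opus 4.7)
The plan is to start from the lemma which identifies $m_k(A_{\mathcal{G}})$ with $\frac{1}{n}|\Psi_{\mathcal{G}}^{(k)}|$ and then carry out a double partition of $\Psi_{\mathcal{G}}^{(k)}$: first by the underlying labeled simple graph $h=\mathcal{H}(w)$, and then by the unlabeled isomorphism class $g=\mathcal{I}(h)$. The paper has already set up both partitions ($\mathcal{R}_{h}$ on $\Psi_{\mathcal{G}}^{(k)}$ and $\mathcal{S}_{g}$ on $\mathbf{H}_{k}$), so the proof is essentially a careful bookkeeping using the surjections $\mathcal{H}$ and $\mathcal{I}$.

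First I would write
\begin{equation*}
\left|\Psi_{\mathcal{G}}^{(k)}\right| \;=\; \sum_{h\in\mathbf{H}_{k}} |\mathcal{R}_{h}| \;=\; \sum_{g\in\mathbf{I}_{k}} \sum_{h\in\mathcal{S}_{g}} |\mathcal{R}_{h}|,
\end{equation*}
using disjointness of the two partitions. Next, I would argue that for any $h\in\mathbf{H}_{k}$, the number of closed walks of length $k$ in $\mathcal{G}$ whose underlying simple graph is exactly $h$ depends only on the isomorphism class of $h$. This is because such a walk uses only edges of $h$ and every edge of $h$ at least once, so $\mathcal{R}_{h}$ can be computed intrinsically inside $h$; any graph isomorphism $h\to h'$ bijectively carries these walks onto the corresponding walks in $h'$. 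Therefore $|\mathcal{R}_{h}|$ equals the constant $\omega_{g}^{(k)}=|\{w\in\Psi_{g}^{(k)}:\mathcal{I}(\mathcal{H}(w))=g\}|$ for every $h\in\mathcal{S}_{g}$.

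Substituting this and using the observation already noted in the paper that $|\mathcal{S}_{g}|=F(g,\mathcal{G})$, the inner sum collapses into $\omega_{g}^{(k)}\,F(g,\mathcal{G})$, giving
\begin{equation*}
\left|\Psi_{\mathcal{G}}^{(k)}\right| \;=\; \sum_{g\in\mathbf{I}_{k}} \omega_{g}^{(k)}\, F(g,\mathcal{G}).
\end{equation*}
Dividing by $n$ and invoking the previous lemma yields the claimed formula.

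The only delicate step is the invariance claim $|\mathcal{R}_{h}|=\omega_{g}^{(k)}$ for all $h\in\mathcal{S}_{g}$: one must verify that a walk counted in $\mathcal{R}_{h}$ indeed involves only edges of $h$ (so it makes sense to count it inside $h$ alone), and that a graph isomorphism between two representatives of the class $g$ induces a bijection on their closed-walk sets of length $k$ that preserves the property of having $h$ (respectively $h'$) as underlying simple graph. Both facts follow immediately from the definitions of $\mathcal{H}(w)$ and of isomorphism, but they are what makes the linear combination in the theorem well defined with coefficients depending only on the unlabeled motif $g$.
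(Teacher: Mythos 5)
Your proposal is correct and follows essentially the same route as the paper: partition $\Psi_{\mathcal{G}}^{(k)}$ via the surjections $\mathcal{H}$ and $\mathcal{I}$, use the invariance $|\mathcal{R}_{h}|=\omega_{g}^{(k)}$ for $h\in\mathcal{S}_{g}$ together with $|\mathcal{S}_{g}|=F(g,\mathcal{G})$, and substitute into the closed-walk lemma. If anything, you justify the isomorphism-invariance of $|\mathcal{R}_{h}|$ more explicitly than the paper does, which simply asserts it.
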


\begin{proof}
Consider the composition function $\mathcal{J}=\mathcal{I}\circ \mathcal{H}$%
. Since $\mathcal{J}:\Psi _{\mathcal{G}}^{\left( k\right) }\rightarrow 
\mathbf{I}_{k}$ is a composition of surjective functions, it is itself a
surjective function. Thus, $\mathcal{J}$ induces a trivial partition in $%
\Psi _{\mathcal{G}}^{\left( k\right) }$ as follows. Given an unlabeled graph 
$g\in \mathbf{I}_{k}$, we define a partition subset $\mathcal{T}%
_{g}\subseteq \Psi _{\mathcal{G}}^{\left( k\right) }$ as $\mathcal{T}%
_{g}=\left\{ w\in \Psi _{\mathcal{G}}^{\left( k\right) }\text{ s.t. }%
\mathcal{J}\left( w\right) =g\right\} =\bigcup\nolimits_{h\in \mathcal{S}%
_{g}}R_{h}$. Based on this partition, we can compute the number of closed
walks of length $k$ in $\mathcal{G}$, $\left\vert \Psi _{\mathcal{G}%
}^{\left( k\right) }\right\vert $, as%
\begin{equation*}
\left\vert \Psi _{\mathcal{G}}^{\left( k\right) }\right\vert =\sum_{g\in 
\mathbf{I}_{k}}\left\vert \mathcal{T}_{g}\right\vert =\sum_{g\in \mathbf{I}%
_{k}}\sum_{h\in \mathcal{S}_{g}}\left\vert R_{h}\right\vert .
\end{equation*}%
Note that, for a particular (labeled) graph $h\in \mathbf{H}_{k}$, the
number of closed walks of length $k$ such that their underlying simple graph
is $h$, i.e., $\left\vert R_{h}\right\vert $,\ is a quantity that depends
exclusively on the topology of the unlabeled version of $h$, i.e., $g=%
\mathcal{I}\left( h\right) $. We define this quantity as $\omega _{k}\left(
g\right) =\left\vert R_{h}\right\vert $. Hence, we can rewrite the above as%
\begin{eqnarray*}
\left\vert \Psi _{\mathcal{G}}^{\left( k\right) }\right\vert &=&\sum_{g\in 
\mathbf{I}_{k}}\left\vert R_{h}\right\vert \sum_{h\in \mathcal{S}_{g}}1 \\
&=&\sum_{g\in \mathbf{I}_{k}}\omega _{g}^{\left( k\right) }\left\vert
S_{g}\right\vert ,
\end{eqnarray*}%
where we have used the fact that $\left\vert R_{h}\right\vert =\omega
_{g}^{\left( k\right) }$ in the last equality. Therefore, substituting the
above in (\ref{Moments as Walks in Graph}) we obtain the expression for the
spectral moments in the statement of the theorem.
\end{proof}

\begin{figure}[tbp]
\centering
\includegraphics[width=1.0\linewidth]{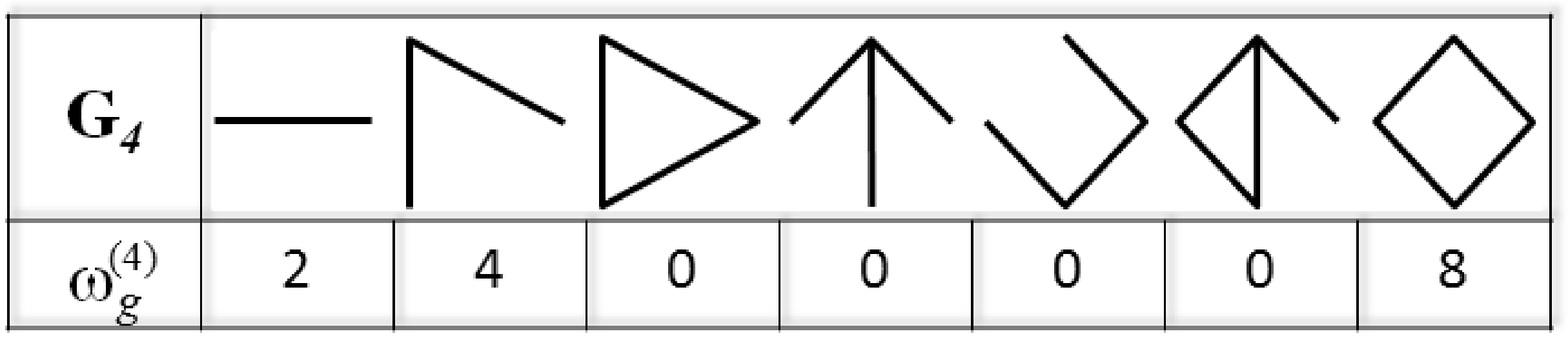} 
\caption{In this table, we represent the set of nonisomorphic connected
graph with at most 4 nodes and 4 edges. The set of subgraphs in $\mathbf{I}%
_{4}$ are those with corresponding coefficient $\protect\omega _{g}^{(4)}>0$%
. These subgraphs are involved in the computation of the $4$-th spectral
moment.}
\end{figure}

The expression in the right-hand side of (\ref{Moments from Motifs}) is a
linear combination of the embedding frequencies of the set of subgraphs $%
g\in \mathbf{I}_{k}$. The coefficients $\omega _{k}\left( g\right) $ in this
linear combination are defined as the number of closed walks of length $k$
with unlabeled underlying graph $g$. In what follows, we describe an
algorithm to determine the set $\mathbf{I}_{k}$ and compute $\omega
_{g}^{\left( k\right) }$. In order to compute the first $K$ spectral
moments, we have to study the sets of closed walks $\Psi _{\mathcal{G}%
}^{\left( k\right) }$, for $k\leq K$. For each particular value of $k$, the
set $\mathbf{I}_{k}$ are unlabeled connected graphs with at most $k$ edges.
Also, the maximum number of nodes visited by walks in $\Psi _{\mathcal{G}%
}^{\left( k\right) }$ are equal to $k$. We denote by $\mathbf{G}_{k}$ the
set of all (unlabeled) nonisomorphic connected graphs with at most $k$ nodes
and at most $k$ edges\footnote{%
This set can be easily determined using the command \texttt{ListGraphs}
included in the package \texttt{Combinatorica} included in \texttt{%
Mathematica}.}; hence, $\mathbf{I}_{k}\subseteq \mathbf{G}_{k}$. For
example, we illustrate all the graphs in $\mathbf{G}_{4}$ (excepting the
isolated-node graph) in Fig. 1.

In order to compute the $k$-th moment via (\ref{Moments from Motifs}), we
need to compute $\omega _{g}^{\left( k\right) }$ for all $g\in \mathbf{I}%
_{k} $, where $\omega _{g}^{\left( k\right) }$ can be interpreted as the the
number of closed walks of length $k$ in $g$ that visit all the nodes and
edges of $g$ (hence, its underlying simple graph is $g$). The computational
complexity of computing $\omega _{g}^{\left( k\right) }$ is the same as the
one of counting the number of Eulerian walks in an undirected (multi)graph.
This counting problem is known to be $\#P$-complete \cite{BW04}. Hence,
there is not a closed-form expression for $\omega _{g}^{\left( k\right) }$,
and it has to be computed via a brute-force combinatorial exploration over
all the possible closed walks of length $k$ visiting all the nodes and edges
of the subgraph $g$. This exploration can be performed in reasonable time
for subgraphs $g$ of small and medium size -- which are the ones we are
interested in. Although computationally expensive, this computation is done
once and for all for a particular $g$. In other words, once $\omega
_{g}^{\left( k\right) }$ is computed, the coefficient for the spectral
moment in (\ref{Moments from Motifs})\ are known for any given $\mathcal{G}$.

\begin{figure*}[tbp]
\centering
\includegraphics[width=0.9\linewidth]{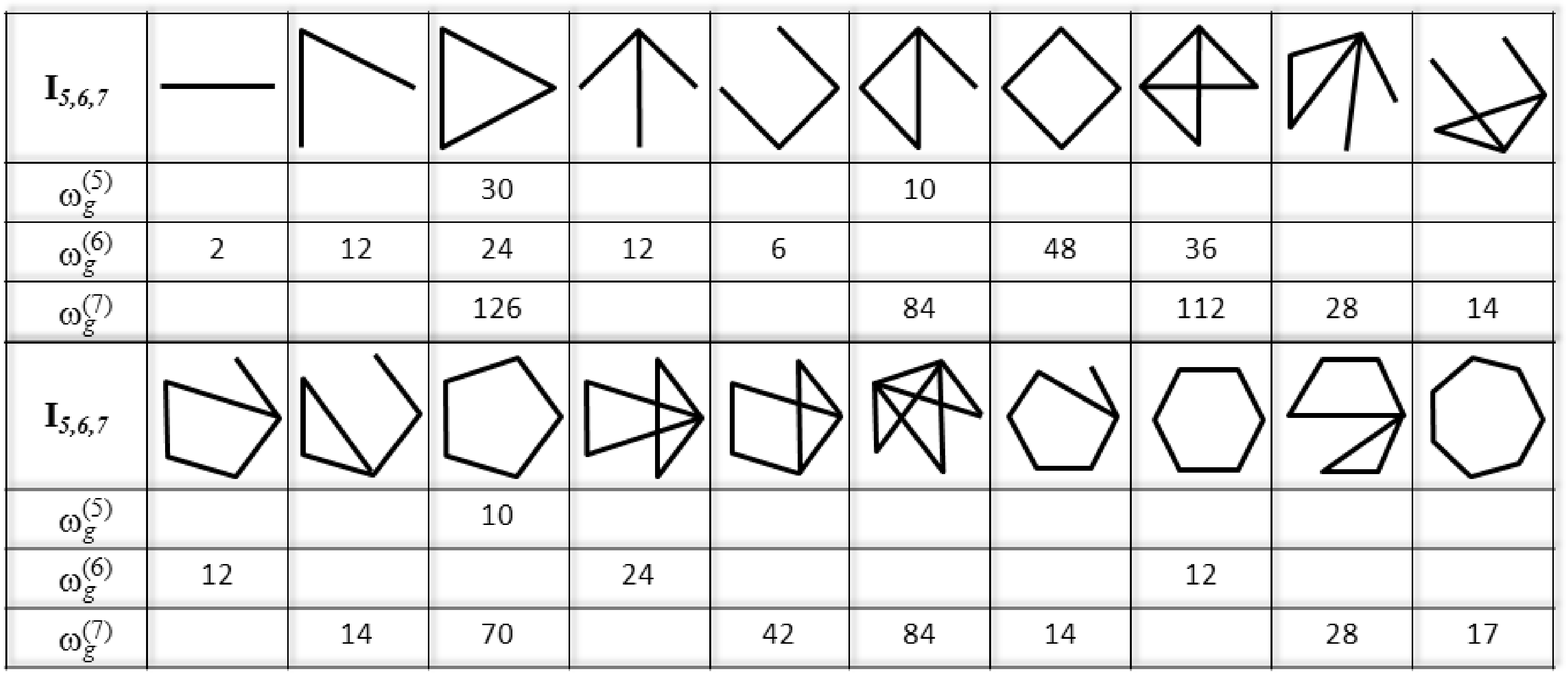} 
\caption{In the above table, we represent the set of nonisomorphic connected
graphs with at most 7 nodes and 7 edges. The set of subgraphs in $\mathbf{I}%
_{5}$, $\mathbf{I}_{6}$, and $\mathbf{I}_{7}$ are those with corresponding
coefficients $\protect\omega _{g}^{(5)},\protect\omega _{g}^{(6)},$ and $%
\protect\omega _{g}^{(7)}>0$, respectively.}
\end{figure*}

For convenience, we provide the coefficients $\omega _{g}^{\left( k\right) }$
for all the graphs in $\mathbf{G}_{4}$ in Fig. 1. Using this table as an
example, we also observe that those graphs $t\in \mathbf{G}_{k}$ such that $%
t\not\in \mathbf{I}_{k}$, have $\omega _{t}^{\left( k\right) }=0$ (since $%
\omega _{t}^{\left( k\right) }$ being zero indicates that there is no walk
in $t$ satisfying the conditions to be part of the set $\mathbf{I}_{k}$).
For convenience, we also provide a list of the graphs in $\cup _{k=5,6,7}%
\mathbf{I}_{k}$ and the associated coefficients $\omega _{g}^{\left(
k\right) }$ in Fig. 2 (where we have left blank those cells where $\omega
_{g}^{\left( k\right) }=0$). In the following paragraphs, we illustrate the
usage of Theorem \ref{Motif Theorem} to derive explicit expressions for the
first $5$ spectral moments of a given graph $\mathcal{G}$.

From the subgraphs and coefficients in Fig. 1, we can compute the $4^{th}$
spectral moment via (\ref{Moments from Motifs}) as follows%
\begin{equation}
m_{4}(A_{\mathcal{G}})=\frac{1}{n}\left[ 2E_{\mathcal{G}}+4\Lambda _{%
\mathcal{G}}+8\Phi _{\mathcal{G}}^{\left( 4\right) }\right] ,
\label{4th Moment}
\end{equation}%
where $E_{\mathcal{G}}$ is the number of edges, $\Lambda _{\mathcal{G}}$ is
the number of wedge-graphs\footnote{%
A wedge graph is isomorphic to a chain graph of length 2.} in $\mathcal{G}$,
and $\Phi _{\mathcal{G}}^{\left( 4\right) }$ is the number of 4-cycles in $%
\mathcal{G}$ (see subgraphs and coefficients in Fig. 1). Thus, in order to
compute the $4^{th}$ spectral moment, we must be able to count the number of
wedge-graphs and 4-cycles in $\mathcal{G}$. Although the number of edges and
cycles in a graph are common network metrics, the number of wedges is not.
It is then convenient to rewrite the number of wedges in terms of more
familiar network metrics. In fact, we can rewrite the number of wedges in
terms of the degree sequence of the graph as follows:%
\begin{equation*}
\Lambda _{\mathcal{G}}=\sum_{j=1}^{n}\binom{\deg j}{2}=\frac{1}{2}%
(W_{2}-W_{1}),
\end{equation*}%
where $W_{r}=\sum_{v=1}^{n}\left( \deg v\right) ^{r}$ is the $r$-th
power-sums of the degree sequence $\left\{ \deg v\right\} _{v=1}^{n}$. Since 
$E_{\mathcal{G}}=\frac{1}{2}\sum_{v=1}^{n}\deg v$ in a simple graph, we can
write \ref{4th Moment} in terms of power-sums and $4$-cycles as:%
\begin{equation}
m_{4}(A_{\mathcal{G}})=\frac{1}{n}\left[ 2W_{2}-W_{1}+8\Phi _{\mathcal{G}%
}^{\left( 4\right) }\right] .  \label{4th Moment Metrics}
\end{equation}%
We illustrate this result in the following example.

\begin{example}
\label{Ring graph}Consider the $n$-ring graph $R_{n}$ (without self-loops).
We know that the eigenvalues of the adjacency matrix of the ring graph $%
A_{R_{n}}$ are $\left\{ 2\cos i\frac{2\pi }{n}\right\} _{i=0}^{n-1}$. Hence,
the 4-th moment is given by the summation $m_{4}(A_{R_{n}})=\frac{1}{n}%
\sum_{i=0}^{n-2}\left( 2\cos i\frac{2\pi }{n}\right) ^{4}$, which (after
some computations) can be found to be equal to $6$ for $n\not\in \left\{
2,4\right\} $. We can apply (\ref{4th Moment}) to easily reach the same
result \emph{without performing an eigenvalue decomposition}, as follows. In
the ring graph, we have $E_{\mathcal{G}}=n,$ $\Lambda _{R_{n}}=n$ for $n\neq
2$, $\Lambda _{R_{2}}=0,$ $\Phi _{R_{n}}^{\left( 4\right) }=0$ for $n\neq 4$%
, and $\Phi _{R_{4}}^{\left( 4\right) }=1$. Hence, we obtain the same value
for $m_{4}(A_{R_{n}})$ directly from (\ref{4th Moment}).
\end{example}

\bigskip

From the subgraphs in Fig. 2, and the row of coefficients $\omega _{g}^{(5)}$%
, we have the following expression for the $5^{th}$ spectral moment via (\ref%
{Moments from Motifs}):%
\begin{equation}
m_{5}(A_{\mathcal{G}})=\frac{1}{n}\left[ 30\Delta _{\mathcal{G}}+10\Upsilon
_{\mathcal{G}}+10\Phi _{\mathcal{G}}^{\left( 5\right) }\right] ,
\label{5th Moment}
\end{equation}%
where $\Delta _{\mathcal{G}}$ and $\Phi _{\mathcal{G}}^{(5)}$ is the number
of triangles and $5$-cycles in $\mathcal{G}$. Also, we represent by $%
\Upsilon _{\mathcal{G}}$ the number of subgraphs isomorphic to the sixth
subgraph in the top row of Fig. 2, counting from the left. Analyzing the
structure of this subgraph, we have that $\Upsilon _{\mathcal{G}%
}=\sum_{i=1}^{n}(d_{i}-2)\Delta _{\mathcal{G}}^{(i)}$, where $\Delta _{%
\mathcal{G}}^{(i)}$ is the number of triangles in $\mathcal{G}$ touching
node $i$. Hence, defining the clustering-degree correlation coefficient as $%
\mathcal{C}_{\Delta d}=\sum_{i=1}^{n}d_{i}\Delta _{\mathcal{G}}^{(i)}$, we
can write $\Upsilon _{\mathcal{G}}=\mathcal{C}_{\Delta d}-6\Delta _{\mathcal{%
G}}$, and the $5^{th}$ spectral moment as:%
\begin{equation}
m_{5}(A_{\mathcal{G}})=\frac{1}{n}\left[ 10\Phi _{\mathcal{G}}^{\left(
5\right) }-30\Delta _{\mathcal{G}}+10\mathcal{C}_{\Delta d}\right] .
\label{5th Moment Metrics}
\end{equation}%
Notice that, apart from triangles and 5-cycles, the clustering-degree
correlation influences the 5-th moment; hence, nontrivial variations of the
local clustering with the degree, as reported in \cite{RSMOB02}, are
relevant in the computation of the 5-th spectral moments.

\begin{remark}
The main advantage of (\ref{Moments from Motifs}) in Theorem \ref{Motif
Theorem} may not be apparent in graphs with simple, regular structure. For these graphs, an
explicit eigenvalue decomposition is available and there may be no need to
look for alternative ways to compute spectral moments. On the other hand, in
the case of large-scale complex networks, the structure of the network is
usually very intricate, in many cases not even known exactly, and an
explicit eigenvalue decomposition can be very challenging, if not
impossible. It is in these cases where the alternative approach proposed in
this paper is most useful.
\end{remark}

As we show in the next section, the spectral moments can be efficiently
computed via a distributed approach from aggregation of local samples of the
graph topology, \emph{without knowing the complete structure of the network}.

\bigskip

\section{Distributed Computation of Spectral Moments}

\label{Distributed Algorithm}

According to (\ref{Moments from Motifs}), the $k$-th spectral moment is
equal to a linear combination of the embedding frequencies $F\left( g,%
\mathcal{G}\right) $ for $g\in \mathbf{I}_{k}$. On the other hand, computing
the embedding frequencies in large-scale networks can be challenging in a
centralized framework, since the computational cost of counting subgraphs
rapidly grows with the network size. In this section, we introduce an
efficient decentralized approach to compute the embedding frequencies of
subgraphs from local samples of the network topology.

During this section, we assume that there is an agent in each node $v\in 
\mathcal{V}\left( \mathcal{G}\right) $ that is able to access its $r$-th
neighborhood, $\mathcal{G}_{r}^{v}$. A naive approach to compute the
embedding frequency of a particular subgraph $g$ would be to compute the
embedding frequency of $g$ in each neighborhood,\ $F(g,\mathcal{G}_{r}^{v})$%
, and sum them up. This approach obviously does not work because this
particular subgraph $g\subseteq \mathcal{G}$ can be in the intersection of
multiple neighborhoods; hence, that subgraph would be counted multiple
times. In what follows, we propose a decentralized counting procedure that
allow us to know how many times a particular subgraph is counted.

First, we need to introduce several definitions:

\begin{definition}
We say that a graphical property $P_{\mathcal{G}}$ is \emph{locally
measurable within a radius }$r$\emph{\ around a node} $v$ if $P_{G}$ is a
function of $\mathcal{N}_{r}^{v}$, i.e., $P_{\mathcal{G}}=f\left( \mathcal{N}%
_{r}^{v}\right) $.
\end{definition}

\begin{definition}
We say that a subgraph $h\subseteq \mathcal{G}$ is \emph{locally countable
within a radius} $r$ \emph{around a node} $v$ if $h\subseteq \mathcal{G}%
_{r}^{v}$.
\end{definition}

For example, both edges and triangles touching a node $v$ are locally
countable within a radius $1$. Also, the number of quadrangles touching $v$
is locally countable within a radius $2$. Furthermore, a wedge is locally
countable within a radius 1 only by the node at the center of the wedge, but
it is not countable by the nodes at the extremes of the wedge. On the other
hand, the wedge is locally countable within a radius 2 by all the nodes in
the wedge. In this examples, we observe how not all the nodes being part of
a subgraph $h\subseteq \mathcal{G}$ have to be able to locally count the
subgraph. In particular, if the radius $r$ is smaller than $\left\lceil
diam\left( h\right) /2\right\rceil $, none of the nodes in $h$ is able to
count the subgraph locally. On the contrary, if the radius $r$ is greater or
equal to the diameter of the subgraph, all the nodes in $h$ are able to
locally count it. In the middle range, $\left\lceil diam\left( h\right)
/2\right\rceil \leq r\leq diam\left( h\right) $, some nodes are able to
locally count $h$ and some are not.

\begin{definition}
For a given value of $r$, the set of \emph{detector nodes} of a given
subgraph $h\subseteq \mathcal{G}$ is defined as%
\begin{equation*}
D_{h}^{\left( r\right) }=\left\{ v\in \mathcal{V}\left( h\right) \text{ s.t. 
}h\subseteq \mathcal{G}_{r}^{v}\right\} .
\end{equation*}
\end{definition}

Note that, although there can be other nodes $u\not\in \mathcal{V}\left(
h\right) $ able to locally count $h$, we do not include them in the set of
detector nodes $D_{h}$. Also note that, given an unlabeled graph $g$, the
size of $D_{h_{i}}^{\left( r\right) }$ for all the subgraphs $h_{i}\subseteq 
\mathcal{G}$ isomorphic to $g$ depends exclusively on the structure of $g$.
In other words, we have that $\left\vert D_{h_{i}}^{\left( r\right)
}\right\vert =\left\vert D_{g}^{\left( r\right) }\right\vert $ for all $%
i=1,...,F\left( g,\mathcal{G}\right) $. For a given $g$, the value of $%
\left\vert D_{g}^{\left( r\right) }\right\vert $ can be algorithmically
computed as follows:

\begin{enumerate}
\item \textbf{Initialize} $M:=0;$

\item \textbf{For} each node $u\in \mathcal{V}\left( g\right) $;

\item \qquad Compute $\delta _{g}(u):=\max_{v\in g}d(u,v)$;

\item \qquad \textbf{If} $\delta _{g}(u)\leq r,$ \textbf{Then} $M:=M+1;$

\item \textbf{End For},

\item $\left\vert D_{g}^{\left( r\right) }\right\vert :=M$.
\end{enumerate}

For convenience, we include the values of $\left\vert D_{g}^{\left( r\right)
}\right\vert $ for $g\in \cup _{k=4,5,6}\mathbf{I}_{k}$ for radius $r=1,2,$
and $3$ in Fig. 4.

\begin{figure*}[tbp]
\centering
\includegraphics[width=0.9\linewidth]{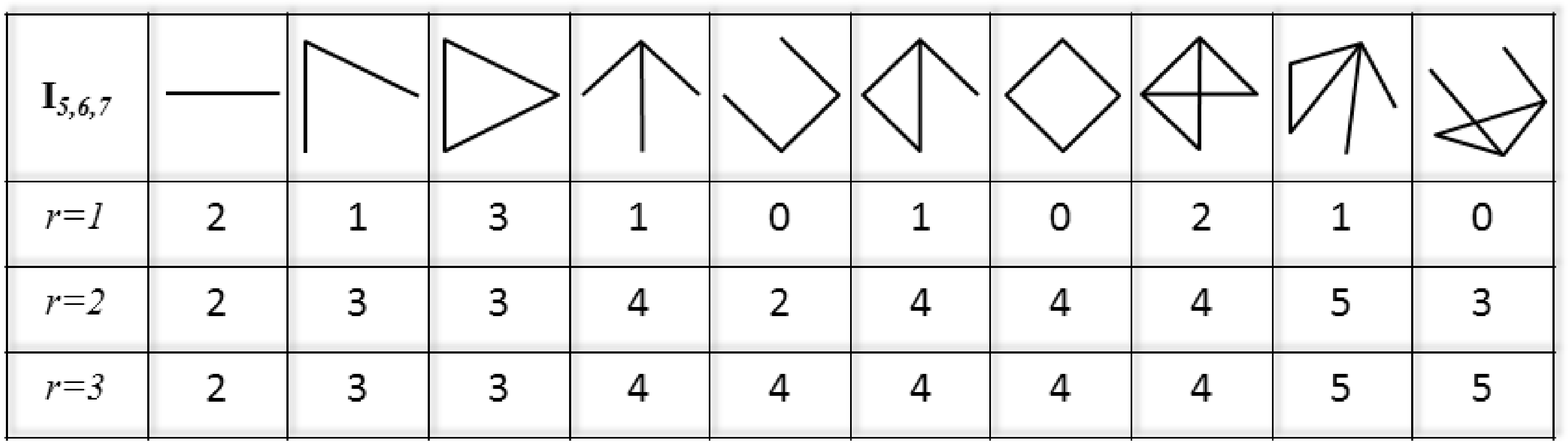} 
\caption{In the above table, we represent several nonisomorphic connected
graphs, and the corresponsing values of $\left\vert D_{g}^{\left( r\right)
}\right\vert $ for $r\in \left\{ 1,2,2\right\} $.}
\end{figure*}

After these preliminary results, we now describe a novel algorithm to
distributedly compute $F\left( g,\mathcal{G}\right) $. Let us consider a
particular node $v\in \mathcal{V}\left( \mathcal{G}\right) $. We define the 
\emph{local embedding frequency} of subgraph $g$ within a neighborhood of
radius $r$ around node $v$, denoted by $H\left( g,\mathcal{N}_{r}^{v}\right) 
$, as the number of different subgraphs $l_{i}\subseteq \mathcal{N}_{r}^{v}$
isomorphic to $g$ such that $v\in \mathcal{V}\left( l_{i}\right) $. Our
distributed algorithm is based on the following result:

\begin{theorem}
\label{Distributed Moment Computation}Let $\mathcal{G}$ be a simple graph.
The spectral moments of the adjacency matrix of $\mathcal{G}$ can be written
as 
\begin{equation}
m_{k}(A_{\mathcal{G}})=\frac{1}{n}\sum_{v\in \mathcal{V}\left( \mathcal{G}%
\right) }\sum_{g\in \mathbf{I}_{k}}\frac{\omega _{k}\left( g\right) }{%
\left\vert D_{g}^{\left( r\right) }\right\vert }H\left( g,\mathcal{N}%
_{r}^{v}\right) .  \label{Distributed Spectral Moment}
\end{equation}
\end{theorem}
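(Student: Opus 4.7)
The plan is to reduce the claim to Theorem 1 (the Motif Theorem), which already expresses $m_k(A_{\mathcal{G}})$ as $\frac{1}{n}\sum_{g \in \mathbf{I}_k} \omega_g^{(k)} F(g,\mathcal{G})$. So it suffices to prove the purely combinatorial identity
\begin{equation*}
F(g,\mathcal{G}) \;=\; \frac{1}{|D_g^{(r)}|}\sum_{v \in \mathcal{V}(\mathcal{G})} H(g,\mathcal{N}_r^v)
\end{equation*}
for every $g \in \mathbf{I}_k$, and then substitute. Geometrically, the right-hand side is a weighted aggregation of local subgraph counts; the weight $1/|D_g^{(r)}|$ is the factor needed to cancel the over-counting produced when several nodes of the same embedded copy of $g$ are all able to detect it within radius $r$.

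To establish this identity I would run a standard double-counting argument on the set of pairs
\begin{equation*}
\mathcal{P} \;=\; \bigl\{(v,l)\,:\, l \subseteq \mathcal{G},\ l \cong g,\ v \in \mathcal{V}(l),\ l \subseteq \mathcal{G}_r^v\bigr\}.
\end{equation*}
Summing over $v$ first, the inner slice $\{l:(v,l)\in\mathcal{P}\}$ is exactly what $H(g,\mathcal{N}_r^v)$ counts: copies of $g$ inside the radius-$r$ neighborhood of $v$ that pass through $v$. Summing over $l$ first, the slice $\{v:(v,l)\in\mathcal{P}\}$ is precisely the detector set $D_l^{(r)}$, by definition. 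Since, as observed in the paper just before the statement, $|D_l^{(r)}|$ depends only on the unlabeled isomorphism class of $l$, we have $|D_l^{(r)}| = |D_g^{(r)}|$ for every subgraph $l$ isomorphic to $g$, and therefore
\begin{equation*}
\sum_{v \in \mathcal{V}(\mathcal{G})} H(g,\mathcal{N}_r^v) \;=\; |\mathcal{P}| \;=\; \sum_{\substack{l \subseteq \mathcal{G}\\ l \cong g}} |D_l^{(r)}| \;=\; |D_g^{(r)}|\cdot F(g,\mathcal{G}).
\end{equation*}
Dividing by $|D_g^{(r)}|$ and substituting into the Motif Theorem yields (\ref{Distributed Spectral Moment}).

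The main obstacle is making the bijective correspondence between the two descriptions of $\mathcal{P}$ air-tight. Concretely, one must verify the equivalence $v \in D_l^{(r)} \iff v \in \mathcal{V}(l)$ and $l \subseteq \mathcal{N}_r^v$: the subtlety is that ``subgraph of $\mathcal{N}_r^v$'' in the definition of $H$ refers to subgraphs of the induced subgraph $\mathcal{G}_r^v$, so distances are measured in $\mathcal{G}$, matching the definition of the detector set. A secondary issue is that the formula implicitly requires $|D_g^{(r)}| > 0$ for every $g \in \mathbf{I}_k$, which holds provided $r \ge \lceil \mathrm{diam}(g)/2 \rceil$; since every $g \in \mathbf{I}_k$ has at most $k$ edges and hence diameter at most $k-1$, it is enough to take $r \ge \lceil (k-1)/2 \rceil$, which we may assume throughout. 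With these points pinned down, the remainder of the argument is the routine substitution described above.
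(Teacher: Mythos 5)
Your proposal is correct and follows essentially the same route as the paper: the key identity $\sum_{v}H\left(g,\mathcal{N}_{r}^{v}\right)=\left\vert D_{g}^{\left(r\right)}\right\vert F\left(g,\mathcal{G}\right)$ substituted into Theorem \ref{Motif Theorem}, with your incidence-set double counting simply making explicit the paper's one-line justification that each copy of $g$ is counted $\left\vert D_{g}^{\left(r\right)}\right\vert$ times. Your observation that the formula implicitly needs $\left\vert D_{g}^{\left(r\right)}\right\vert>0$ (i.e., $r$ large enough relative to the diameters of the graphs in $\mathbf{I}_{k}$) is a sound caveat that the paper only addresses in the remark following the theorem.
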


\begin{proof}
Note that the local embedding frequency only count subgraphs $l_{i}$
touching node $v$; hence, $H\left( g,\mathcal{N}_{r}^{v}\right) \leq F\left(
g,\mathcal{N}_{r}^{v}\right) $. Furthermore, we have that%
\begin{equation}
\sum_{v\in \mathcal{V}\left( \mathcal{G}\right) }H\left( g,\mathcal{N}%
_{r}^{v}\right) =\left\vert D_{g}\right\vert ~F\left( g,\mathcal{G}\right) ,
\label{local embedding frequencies}
\end{equation}%
since every graph $h\subseteq \mathcal{G}$ isomorphic to $g$ is counted $%
\left\vert D_{g}^{\left( r\right) }\right\vert $ times in the above
summation. Therefore,\ substituting (\ref{local embedding frequencies}) in (%
\ref{Moments from Motifs}), we obtain the statement of the Theorem.
\end{proof}

\bigskip

Based on Theorem \ref{Distributed Moment Computation} it is straightforward
to compute the $k$-th spectral moment distributedly as follows. First,
define the following local variables%
\begin{equation*}
\mu _{k}^{(r)}\left( v\right) \triangleq \sum_{g\in \mathbf{I}_{k}}\frac{%
\omega _{k}\left( g\right) }{\left\vert D_{g}^{\left( r\right) }\right\vert }%
H\left( g,\mathcal{N}_{r}^{v}\right) ,
\end{equation*}
for all $v\in \mathcal{V}(\mathcal{G})$. Note that $\mu _{k}^{(r)}\left(
v\right) $ is locally measurable within a radius $r$ around each node $v$,
namely, each agent is able to distributedly compute $\mu _{k}^{r}\left(
v\right) $ for all $v\in \mathcal{V}\left( \mathcal{G}\right) $. Thus, from (%
\ref{Distributed Spectral Moment}), we have that the $k$-th spectral moment
can be computed via a simple distributed averaging of $\mu _{k}^{r}\left(
v\right) $.

\begin{remark}
The maximum order of the spectral moment that can be computed via this
distributed approach depends on the radius $r$ of the neighborhood that each
agent can reach. In particular, in order to compute the $k$-th moment, we
should be able to detect all the graphs $g\in \mathbf{I}_{k}$. It is easy to
prove that the $k$-ring with diameter $\left\lfloor k/2\right\rfloor $ is
the graph in $\mathbf{I}_{k}$ with the maximum diameter. As we mentioned
before, in order for a particular subgraph $h$ to be locally countable, the
radius $r$ must be greater or equal to $\left\lceil diam\left( h\right)
/2\right\rceil $. Hence, for a particular $r$, we can distributedly compute
moments up to an order $k_{\max }=2r+1$.
\end{remark}

\section{Conclusions and Future Research}

In this paper, we have derived explicit relationships between spectral
properties of a network and the presence of certain subgraphs. In
particular, we are able to express the spectral moments as a linear
combination of the embedding frequencies of these subgraphs. Since the
spectral properties are known to have a direct influence on the dynamics of
the network, our result builds a bridge between local network measurements
(i.e., the presence of small subgraphs) and global dynamical behavior (via
the spectral moments). Furthermore, based on our result, we have also
developed a novel decentralized algorithm to efficiently aggregate a set of
local network measurements into global spectral moments. Our approach is
based on a an efficient decentralized approach to compute the embedding
frequencies of subgraphs from local samples of the network topology.

Future work involves to extend our methodology to directed graphs and graphs
with self-loops, such as those appearing in transcription networks. Also, we
are interested in developing techniques to extract explicit information
regarding the dynamical behavior of a network from a sequence of spectral
moments. Furthermore, it would be interesting to find a fully decentralized
algorithm to iteratively modify the structure of a network of agents in
order to control its dynamical behavior. We find particularly interesting
the case in which individual agents have knowledge of their local network
structure only (i.e., myopic information), while they try to collectively
aggregate these local pieces of information to find the most beneficial
modification of the network structure.

\bigskip



\end{document}